\def \RR{\mathbb R}
\def \SS{\mathbb S}
\def \CC{\mathbb C}
\def \F{\mathcal F}
\def \sclr#1#2{\langle #1,#2\rangle}
\def \eid{Ei^{(\delta)}}
\def \ei{Ei}
\def \gd{G^{(\delta)}}
\def \hd{H^{(\delta)}}
\renewcommand{\geq}{\geqslant}
\renewcommand{\leq}{\leqslant}
\definecolor{darkgreen}{rgb}{0,0.4,0}
\definecolor{MyDarkBlue}{rgb}{0,0.08,0.50}
\definecolor{BrickRed}{rgb}{0.65,0.08,0}
\theoremstyle{plain}
\newtheorem{theorem}{Theorem}
\newtheorem*{theorem-sum}{Theorem}
\newtheorem{proposition}{Proposition}
\newtheorem{lemma}[proposition]{Lemma}
\theoremstyle{definition}
\title[]{A family of Fourier transform's eigenfunctions}
\author[Rodolphe Garbit]{Rodolphe Garbit$^{1}$}
\address{$^{1}$ Univ Angers, CNRS, LAREMA, SFR MATHSTIC, F-49000 Angers, France}
\email{rodolphe.garbit@univ-angers.fr}
\author[Julien-Bilal Zinoune]{Julien-Bilal Zinoune$^{2,3}$}
\address{$^{2}$ Univ Angers, LPHIA, SFR MATRIX, F-49000 Angers, France}
\address{$^{3}$ ESAIP, CERADE, 18 rue du 8 mai 1945, 49180 St-Barth\'elemy d'Anjou Cedex, France}
\email{julienbilal.zinoune@univ-angers.fr}
\thanks{The first author was supported by the ANR project ``Rawabranch'' number ANR-23-CE40-0008. The second author was supported by a doctoral contract awarded by the Universit\'e d'Angers and supplemented by financial support from ESAIP
}
\keywords{Fourier transform; eigenfunctions; exponential integral}
\subjclass{42B10, 46F12, 78A10, 80A10}
\date{\today}
\begin{document}

\begin{abstract}
This paper presents a family of Fourier eigenfunctions indexed by the space dimension $d$. These eigenfunctions are radial and built upon some generalized exponential integral function. For $d=1,2, 3$, they are integrable or square integrable and give new explicit examples of Fourier eigenfunctions in the usual or Fourier-Plancherel sense. For $d\geq 4$, the functions are examples of non standard eigenfunctions, i.e.~eigenfunctions in the sense of distribution. The discovery of these eigenfunctions stems from research in thermal lens spectroscopy, at the intersection of thermodynamics and optics.
Their use could simplify the analysis of thermo-optical systems, paving the way for applications in optical computing, material studies and thermodynamic.
\end{abstract}

\maketitle

\section{Introduction and main result}

This paper deals with eigenfunctions of the Fourier transform, i.e.~functions $f$ whose Fourier transform $\widehat{f}$ satisfies $\widehat{f}=\lambda f$ for some complex number $\lambda$. Here, the Fourier transform of an integrable function $f:\RR^d\to \CC$, $d\geq 1$, is defined with the following convention:
\begin{equation}
\widehat{f}(t)=\int_{\RR^d} e^{-i\sclr{t}{x}} f(x) dx,
\end{equation}
where $t=(t_1,t_2,\ldots, t_d)$ and $x=(x_1,x_2,\ldots, x_d)$ belong to $\RR^d$, $\sclr{t}{x}=\sum_{i=1}^d t_ix_i$ is the standard inner product, and $dx$ stands for the $d$-dimensional Lebesgue measure.
We shall also write $\F(f)$ instead of $\widehat{f}$ at some points in the text.

A fundamental example of eigenfunction is the Gaussian function $\exp\left(-\Vert x \Vert^2/2 \right)$, where $\Vert x\Vert^2=x_1^2+x_2^2+\cdots+x_d^2$. This eigenfunction is associated with the eigenvalue $\lambda=(2\pi)^{d/2}$. 
In dimension $1$, other examples include the hyperbolic secant function, Hermite functions, or
the function $\vert x\vert^{-1/2}$. This last example is a \emph{non standard} eigenfunction, meaning the function is not integrable, nor square integrable; its Fourier transform is understood in the sense of \emph{distribution}. Other examples and formulas that generate eigenfunctions can be found in the book \cite[Chapter IX]{Tit48}.
In dimension $d\geq 2$, eigenfunctions can be obtainted by considering tensor products 
$f(x)=\prod_{k=1}^d f_i(x_i)$ where each $f_i$ is a one-dimensional eigenfunction.
Examples of eigenfunctions that are not (combinations of) tensor products of one-dimensional eigenfunctions include functions $P(x)\exp(-\Vert x\Vert^2/2)$, where $P$ is a homogeneous harmonic polynomial (see \cite[p.85]{Duo01}), or the radial function $\Vert x\Vert^{-d/2}$, which is a non standard one (see \cite[p.71]{Duo01}). In \cite{LaMa17}, the authors present another example of distributional eigenfunction of the planar Fourier transform, which is not a tensor product, namely the function 
$g(x)=\Vert x\Vert / x_1 x_2$. In \cite{LaMa18}, the same authors provide a method that generates planar eigenfunctions from another ones; they illustrate this with examples derived from the function $g$. All these examples have the form $R(x)\Vert x\Vert$, where $R=P/Q$ is some rational fraction and $P,Q$ are homogeneous symmetric or antisymmetric polynomials.

In this paper, we present a collection of explicit eigenfunctions indexed by dimension~$d$. 
For $d\geq 2$, those functions are not tensor products of one-dimensional eigenfunctions; they are radial functions. For $d\geq 4$, they are \emph{non standard} eigenfunctions.

The planar case $d=2$ was discovered by the second author while performing computations on thermal lensing spectroscopy, a technique used to study the absorption properties of species where the interaction of a laser beam with a medium generates a temperature gradient that modifies the phase of the beam. The study of the analytical model led to the discovery of a specific eigenfunction associated with this thermo-optical phenomenon \cite{zinoune2024analytical}. Details about the physical context are given in Section~\ref{physmotiv}.
Starting from this $2$-dimensional example, it is easy to infer a formula that works in any dimension $d\geq 1$. Let us present the general formula.

Let $\delta<2$ be a real number. We denote by $Ei^{(\delta)}$ the $\delta$-exponential function, which we define for $x>0$ by
$$\eid(x)=\int_{-\infty}^x \frac{e^t}{t^\delta} dt,$$
where $t^{\delta}$ means $-\vert t \vert^{\delta}$ for negative $t$. 
When $\delta=1$, this function is known as the \emph{exponential integral} function, and we shall write $\ei$ instead of $Ei^{(1)}$ for simplicity.
For $\delta<1$, the function under the integral symbol is integrable, but for $\delta \in [1,2)$ this is not the case and the formula must be interpreted as a principal value, meaning that
\begin{equation}
\label{eq:defexpintegrale}
\eid(x)=\lim_{\epsilon\to 0} \left\{\int_{-\infty}^{-\epsilon} +\int_{\epsilon}^x\right\} \frac{e^t}{t^{\delta}} dt.
\end{equation}
Using the $\delta$-exponential function, we construct a $d$-dimensional Fourier eigenfunction in the following way: To each integer $d\geq 1$ we associate the radial function $\phi_d : \RR^d \setminus\{0\} \to \RR$ which is defined for all $x=(x_1,x_2,\ldots, x_d)\in \RR^d \setminus\{0\}$ by
\begin{equation}
\label{eq:definitionphid} 
\phi_d(x)= r^{2-d}e^{-r^2/2}\eid(r^2/2), 
\end{equation}
where $r^2=\Vert x\Vert^2=x_1^2+x_2^2+\cdots + x_d^2$, and 
\begin{equation}
\label{eq:defdelta}
\delta=2-\frac{d}{2}.
\end{equation}
Integrability of function $\phi_d$ depends on the dimension $d$. In Section~\ref{sec:integrability} we shall prove the following:
\begin{itemize}
\item $\phi_1$ is integrable, hence it has a Fourier transform in the usual sense;
\item $\phi_2$ and $\phi_3$ are square integrable, thus having a Fourier transform in the sense of Fourier-Plancherel;
\item for $d\geq 4$, the functions $\phi_d$ do not belong to any $L^p$-space. They define \emph{tempered distributions} and their Fourier transform is understood in that sense.
\end{itemize}
With this in mind, we can state our main result:

\begin{theorem}
\label{mainthm}
For any $d\geq 1$, the function $\phi_d$ is an eigenfunction of the Fourier transform on $\RR^d$, associated with eigenvalue $-(2\pi)^{d/2}$.
\end{theorem}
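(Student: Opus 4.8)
The plan is to bypass Bessel functions and radial-Fourier machinery entirely by realising $\phi_d$ as a continuous superposition of Gaussians of varying width, each of which has an explicit transform. Concretely, I would first establish the integral representation
\[
\phi_d(x)=2^{\delta-1}\,\mathrm{p.v.}\int_0^\infty \frac{e^{-a\Vert x\Vert^2/2}}{(1-a)^{\delta}}\,da\qquad(x\neq 0),
\]
where $(1-a)^{-\delta}$ is read with exactly the sign convention of \eqref{eq:defexpintegrale}, i.e.\ $(1-a)^{-\delta}=-(a-1)^{-\delta}$ for $a>1$. Setting $v=\Vert x\Vert^2/2$, this reduces to the one-variable Laplace identity $\int_0^\infty (1-a)^{-\delta}e^{-av}\,da=v^{\delta-1}e^{-v}\eid(v)$, which I would prove by the substitution $a=1+b$ followed by $y=bv$: the piece over $b<0$ becomes $\int_0^v \tau^{-\delta}e^{\tau}\,d\tau$ and the piece over $b>0$ becomes $-\Gamma(1-\delta)$, and these reassemble precisely into the principal value defining $\eid(v)$. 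I would stress that the convention $t^{\delta}=-\vert t\vert^{\delta}$ on negatives is not cosmetic here: it is exactly what makes the two principal values agree (e.g.\ at $\delta=0$ it forces $\eid(v)=e^{v}-2$, hence $\phi_4=\Vert x\Vert^{-2}(1-2e^{-\Vert x\Vert^2/2})$ rather than the pure Riesz kernel).

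The eigenvalue is then produced by a single change of variables. Writing $\mu(a)=2^{\delta-1}(1-a)^{-\delta}$, so that $\phi_d(x)=\int_0^\infty\mu(a)e^{-a\Vert x\Vert^2/2}\,da$, the convention yields at once the functional equation $\mu(1/a)=-a^{\delta}\mu(a)$. Transforming under the integral sign with the exact Gaussian transform $\F(e^{-a\Vert\cdot\Vert^2/2})(t)=(2\pi/a)^{d/2}e^{-\Vert t\Vert^2/(2a)}$ gives
\[
\widehat{\phi_d}(t)=(2\pi)^{d/2}\int_0^\infty \mu(a)\,a^{-d/2}e^{-\Vert t\Vert^2/(2a)}\,da,
\]
and after the substitution $a\mapsto 1/a$ (using $d/2-2=-\delta$, so that the Jacobian and power factors combine into $a^{-\delta}$) together with the functional equation $\mu(1/a)\,a^{-\delta}=-\mu(a)$,
\[
\widehat{\phi_d}(t)=(2\pi)^{d/2}\int_0^\infty \mu(1/a)\,a^{-\delta}e^{-a\Vert t\Vert^2/2}\,da=-(2\pi)^{d/2}\phi_d(t).
\]
Thus the eigenvalue $-(2\pi)^{d/2}$ — including the precise constant and, crucially, the minus sign — falls out uniformly in $d$ from the oddness encoded in $\mu(1/a)=-a^{\delta}\mu(a)$.

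The main obstacle is not the formal computation but the rigorous justification of interchanging $\F$ with the $a$-integral, which is genuinely dimension-dependent. For $x,t\neq 0$ both integrands decay (the factor $e^{-\Vert t\Vert^2/(2a)}$ suppresses the $a^{-d/2}$ blow-up near $a=0$, while Gaussian decay in $a$ beats the algebraic growth of $\mu$ at infinity), so the only delicate point is the behaviour at $a=1$: for $d=1,2$ this is a principal-value singularity, for $d=3$ an integrable one, and for $d\geq 4$ ($\delta\leq 0$) it disappears. For $d=1,2,3$, where $\phi_d\in L^1$ or $L^2$, I would justify the swap by Fubini, controlling the singularity through the principal value uniformly in $a$. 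For $d\geq 4$, where $\phi_d$ is only a tempered distribution, I would instead run the entire argument against a Schwartz test function $\psi$: starting from $\langle\widehat{\phi_d},\psi\rangle=\langle\phi_d,\widehat{\psi}\rangle$, inserting the representation, moving the $a$-integral outside, and applying the Gaussian identity and the substitution $a\mapsto 1/a$ at the level of the pairing. The technical heart is therefore the uniform estimate of the principal value under the double integral and the integrability needed to pass $\widehat{\phi_d}$ through the distributional pairing; once these are secured, the eigenvalue relation holds in every dimension.
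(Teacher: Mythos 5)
Your formal skeleton is correct, and it is essentially the paper's own computation in a cleaner packaging: the paper also realises the function as a superposition of Gaussians (via the decomposition \eqref{eq:defexpintegralesansvp} and the change of variable $t\leftarrow r^2t$), transforms term by term, and recombines through the substitutions $s=-t/(1-t)$ and $s=t/(1+t)$, which together are exactly your inversion $a\mapsto 1/a$; your functional equation $\mu(1/a)=-a^{\delta}\mu(a)$ is an elegant way to make the minus sign in the eigenvalue transparent. Your side remark on $d=4$ is also correct: with the sign convention, $Ei^{(0)}(v)=e^{v}-2$, so the theorem really concerns $\phi_4(x)=\Vert x\Vert^{-2}\bigl(1-2e^{-\Vert x\Vert^{2}/2}\bigr)$, and indeed it must, since the pure Riesz kernel $\Vert x\Vert^{-2}$ in $\RR^4$ has eigenvalue $+(2\pi)^{2}$, not $-(2\pi)^{2}$ (the introduction's claim that $\phi_4$ ``simplifies into $\Vert x\Vert^{-2}$'' is inconsistent on this point).

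The genuine gap is that the justification plan would not go through as you describe it, and the ``technical heart'' you defer is the entire content of the paper's proof. First, for $d=2,3$ the function $\phi_d$ is not in $L^1(\RR^d)$, so $\widehat{\phi_d}$ is not given by any absolutely convergent integral and there is no double integral to which Fubini could even be applied; the paper must damp the function ($f_d^{\alpha}=f_d\,e^{-\alpha r^2}$), compute $\widehat{f_d^{\alpha}}$, and then identify $\lim_{\alpha\to0}\widehat{f_d^{\alpha}}$ with $\widehat{f_d}$ through $L^2$-convergence and extraction of an a.e.\ convergent subsequence --- this limiting step cannot be skipped. Second, even where a pointwise integral exists, Fubini's hypothesis fails for your representation: the $x$-integral of the absolute value of the integrand equals $\vert\mu(a)\vert(2\pi/a)^{d/2}$, which is non-integrable at $a=1$ for $d\leq 2$ (the principal-value cancellation is invisible to Fubini) and at $a=0$ for $d\geq 2$; your observation that the factor $e^{-\Vert t\Vert^{2}/(2a)}$ suppresses the $a^{-d/2}$ blow-up concerns the \emph{post}-interchange expression and is irrelevant to Fubini's hypothesis. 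Curing $a=1$ requires pairing the two sides of the singularity (the paper's $2\sinh$ decomposition), and curing $a=0$ requires either the damping or pairing against a test function; note that your dichotomy is backwards in difficulty --- the test-function argument you reserve for $d\geq4$ is precisely the easy case, since for $\delta\leq 0$ there is no singularity at $a=1$ at all, whereas the ``Fubini'' cases $d=1,2$ are the hard ones. Third, for $d=1,2$ (where $\delta\geq1$) the substitution $a\mapsto 1/a$ \emph{inside} a principal value is itself illegitimate without further argument: it turns the symmetric cutoff $\vert a-1\vert>\epsilon$ into an asymmetric one with cutoff distances $\epsilon/(1+\epsilon)$ and $\epsilon/(1-\epsilon)$, and showing that the principal value is unchanged is exactly the content of the paper's Lemma~\ref{lem:eiddefwithdifferentspeed} (cutoffs agreeing up to $O(\epsilon^{2})$), which your proposal never supplies. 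Until (i) a regularization and limit-identification scheme, (ii) the symmetrization making Fubini applicable, and (iii) the cutoff-flexibility lemma are actually provided, what you have is a correct formal identity together with a list of the difficulties, not a proof.
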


Note that when the dimension $d=4$, the number $\delta$ is equal to $0$ and our function simplifies into $\Vert x\Vert^{-2}$, which is a well-known distributional eigenfunction of the form $\Vert x\Vert^{-d/2}$.

The discovery of these eigenfunctions in a physical context and the interest regarding physics is explained in Section~\ref{physmotiv}. 
The last Section~\ref{sec:proofs} is devoted to the proof of Theorem~\ref{mainthm}.

\section{Physical motivation}
\label{physmotiv}
The discovery of these eigenfunctions stems from research in thermal lens spectroscopy, a field that intersects thermodynamics and optics. Over the past 50 years, numerous studies have explored this technique~\cite{snook1995thermal,jacinto2006thermal,franko2023recent} because it is theoretically an absolute method for measuring the absorption of species in solution~\cite{brannon1978absolute,bindhu1996measurement}. Additionally, it can determine the quantum yields of diffusion or fluorescence in materials, which is crucial for fields such as semiconductors, photovoltaics, photonics, and optoelectronics~\cite{brannon1978absolute,bindhu1996measurement,zinoune2023wavelength,boudebs2023thermal,cruz2010quantum}.

In thermal lens spectroscopy, a collimated laser beam with a Gaussian intensity profile is focused onto a solution using a lens. The solution absorbs part of the laser's energy, creating a slight temperature gradient along the beam's path. This gradient induces a refractive index change, which modifies the phase of the laser beam. This phase shift is then observed as an intensity variation after diffraction to infinity.

Over time, several analytical models have been developed to analyze these results \cite{whinnery1974laser,sheldon1982laser,carter1984comparison,shen1992model,jurgensen1995studies,cabrera2009thermal}. Most of these models focus on measuring the central intensity variation of the laser beam using small-aperture Z-scan techniques, a method widely used in optics \cite{vermeulen2023nonlinear}, particularly for characterizing nonlinear materials. However, one limitation is the relatively high noise level, as the measurement is taken at a single point (the beam's center).

To address this issue, an alternative approach is to measure the variation in the laser beam's diameter using a CCD sensor \cite{boudebs2013nonlinear}. This multi-point measurement improves the signal-to-noise ratio. In a previous study, we developed the analytical model for this system \cite{zinoune2024analytical}, which led to the discovery of the eigenfunction. This is evident in equations~(6) and~(11) from our earlier work on the analytical development of thermal lensing in an afocal setup with a Gaussian profile \cite{zinoune2024analytical}.
Indeed, these equations simplified without constants or irrelevant physical elements are respectively
\begin{equation}
\label{eq:asympten31}
E_{th}(r,t) = \exp\left(-\frac{r^2}{2}\right)\left[\ei(-r^2) - \ei\left(-\frac{r^2}{4t+1}\right)\right],
\end{equation}
and
\begin{equation}
\label{eq:asympten32}
E_s(\rho,t) = 2\exp(-2\rho^2)\left[\ei\left(\frac{4\rho^2}{3}\right) - \ei\left(\frac{4\rho^2}{4t+3}\right)\right],
\end{equation}
where $r = \sqrt{x^2 + y^2}$ is the radial position in the normal domain, and $\rho = \sqrt{u^2 + v^2}$ is the radial position in the spectral domain. Here, $t$ represents time, $E_{\text{th}}(r,t)$ is the field amplitude at the exit of the liquid, and $E_s(\rho,t)$ represents the spectra in the CCD sensor plane. In \cite{zinoune2024analytical}, we obtained the expression for $E_s(\rho,t)$ given in equation~\eqref{eq:asympten32} from the one for $E_{\text{th}}(r,t)$ given in~\eqref{eq:asympten31} using Fourier optics formalism \cite{goodman2005introduction}. These two equations are connected through a Fourier transform along $x$ and $y$ as follows:
\begin{equation}
\label{eq:asympten33}
    E_s(\rho,t) = \mathcal{F}\left[E_{\text{th}}(r,t)\right](\rho,t). 
\end{equation}
Upon comparing equations~\eqref{eq:asympten31} and \eqref{eq:asympten32}, it becomes clear that the fields $E_{\text{th}}(r,t)$ and $E_s(\rho,t)$ are similar, differing only by a set of constants. Given the relationship outlined in equation~\eqref{eq:asympten33}, this suggests that $f(r) = \exp(-r^2) \text{Ei}(r^2)$ is almost an eigenfunction of the planar Fourier transform. We will now present the \emph{formal} calculation, following the method used in our previous work \cite{zinoune2024analytical}, to show that
\begin{equation}
\label{eq:fourierfonctionpropre2d}
\mathcal{F}[f(r)](\rho) = -\pi f\left(\rho/2\right).
\end{equation}
It is then immediate that the function $f(r/\sqrt{2})$ is an eigenfunction of the planar Fourier transform, associated with eigenvalue $-2\pi$.

First, by performing a simple change of variable, the exponential integral term $\ei(r^2)$ can be turned into

\begin{equation*}
\ei(r^2)=\int_{-\infty}^1 \frac{\exp(tr^2)}{t} dt.
\end{equation*}
So, starting from the definition of $f(r)$ and expanding the exponential integral $\ei(r^2)$ into its integral form, we have
\begin{equation*}
\label{eq:asympten35}
   \mathcal{F}[f(r)](\rho) = \mathcal{F}\left[\exp(-r^2)\int_{-\infty}^1 \frac{\exp(tr^2)}{t} dt \right](\rho).
\end{equation*}
By combining the arguments of the exponentials
and exchanging the integral over $t$ with the Fourier transform, we obtain:
\begin{equation*}
\label{eq:asympten36}
\mathcal{F}[f(r)](\rho) = \int_{-\infty}^1 \mathcal{F}\left[\frac{\exp[-(1-t)r^2]}{t}\right](\rho) dt.
\end{equation*}
Taking the Fourier transform of the Gaussian function in
the equation above, we get
\begin{equation*}
\label{eq:asympten37}
\mathcal{F}[f(r)](\rho) = \pi \int_{-\infty}^1 \frac{\exp\left(-\frac{1}{4} \frac{\rho^2}{1-t}\right)}{(1-t)t} dt.
\end{equation*}  
Then we use the change of variable $s = -\frac{t}{1-t}$, which gives
\begin{equation*}
\label{eq:asympten38}
\mathcal{F}[f(r)](\rho) =  -\pi \exp\left(-\frac{\rho^2}{4}\right) \int_{-\infty}^{1} \frac{\exp\left(\frac{\rho^2 s}{4}\right)}{s} ds.
\end{equation*}
Recognizing the last integral as an exponential integral function, we obtain:
\begin{equation*}
\label{eq:asympten39}
\mathcal{F}[f(r)](\rho) = -\pi \exp\left(-\frac{\rho^2}{4}\right)\text{Ei}\left(\frac{\rho^2}{4}\right).
\end{equation*}
Finally, identifying $f(\rho/2)$ on the right-hand side of that last equation leads to equation~\eqref{eq:fourierfonctionpropre2d}. Of course, these calculations deserve rigorous justification. This article aims to provide it.

\section{Proofs}
\label{sec:proofs}

This section is devoted to the proof of Theorem~\ref{mainthm}. First, we shall study the behavior of the $\delta$-exponential function at infinity and near the origin (Section~\ref{sec:estimatesonei}). Our estimates are then used to obtain integrability properties of the eigenfunctions (Section~\ref{sec:integrability}). We finally compute in Section~\ref{sec:compfouriertrans} the Fourier transform of our function $\phi_d$ (in fact, we shall work with a slightly modified version $f_d$ of it). 

\subsection{Modified exponential integral function estimates}
\label{sec:estimatesonei}

Let $\delta<2$ be a real number. Recall the definition of the $\delta$-exponential function:
$$\eid(x)=\int_{-\infty}^x \frac{e^t}{t^\delta} dt, \qquad x>0,$$
where $t^{\delta}$ means $-\vert t \vert^{\delta}$ for negative $t$. 
For $\delta \in [1,2[$ the function under the integral is not integrable and the formula must be interpreted as a principal value, meaning that
\begin{equation*}
\eid(x)=\lim_{\epsilon\to 0} \left\{\int_{-\infty}^{-\epsilon} +\int_{\epsilon}^x\right\} \frac{e^t}{t^{\delta}} dt.
\end{equation*}
To see that the limit exists, simply reverse time and cut the first integral into two pieces to obtain the following identity:
$$\left\{\int_{-\infty}^{-\epsilon} +\int_{\epsilon}^x\right\} \frac{e^t}{t^{\delta}} dt=\int_{\epsilon}^x \frac{2\sinh(t)}{t^{\delta}} dt - \int_{x}^{\infty}\frac{e^{-t}}{t^{\delta}}dt.$$
Since $\sinh(t)\sim t$ as $t\to 0$, the function $\sinh(t)/t^{\delta}$ is integrable on the interval $[0,x]$ as long as $\delta<2$. Hence, the $\delta$-exponential function can be written as
$$
\eid(x)= \int_{0}^x \frac{2\sinh(t)}{t^{\delta}} dt-\int_{x}^{\infty}\frac{e^{-t}}{t^{\delta}}dt.
$$
To take advantage of this formula, we set
$$G^{(\delta)}(x)=\int_{0}^x \frac{2\sinh(t)}{t^{\delta}} dt\quad\mbox{ and }\quad H^{(\delta)}(x)=\int_{x}^{\infty}\frac{e^{-t}}{t^{\delta}}dt,$$
so that
\begin{equation}
\label{eq:defexpintegralesansvp}
\eid(x)=\gd(x)-\hd(x).
\end{equation}

The behavior of function $\eid$ near zero and infinity is obtained in Sections \ref{sec:comportement_zero} and \ref{sec:comportement_infini} below through this decomposition.

\subsubsection{Behavior near zero}
\label{sec:comportement_zero}

Since $\gd(x)$ tends to $0$ as $x\to 0$, whereas $\hd(x)$ has a positive finite or infinite limit, we have that
$$\eid(x)\sim -\hd(x), \qquad x\to 0.$$
Now, for the asymptotics of $\hd(x)$, there are 3 cases.

First, if $\delta<1$ then the integral
$\int_{0}^{\infty}\frac{e^{-t}}{t^{\delta}}dt$
is finite, so that $\hd$ is continuous at $0$.

The second case is $\delta=1$. An integration by parts then gives
$$ H^{(1)}(x)=-\ln x\times  e^{-x} + \int_{x}^{\infty} \ln t \times  e^{-t} dt.$$
Since the function $\ln t \times  e^{-t}$ is integrable on $[0,\infty[$, we obtain that
$$H^{(1)}(x)=-\ln x\times  e^{-x} + \int_{0}^{\infty} \ln t \times  e^{-t} dt + o(1).$$
It is well known that $\int_{0}^{\infty} \ln t \times  e^{-t} dt=-\gamma$, where $\gamma$ is Euler's constant. Therefore,
$$H^{(1)}(x)=-\ln x -\gamma + o(1).$$

Finally, for $1<\delta<2$, an integration by parts gives the identity
$$\hd(x)=\frac{e^{-x}}{(\delta-1) x^{\delta -1}} -\frac{1}{\delta-1} \int_x^\infty \frac{e^{-t}}{t^{\delta-1}}dt,$$
from which we deduce 
$$\hd(x)\sim\frac{1}{(\delta-1) x^{\delta -1}} , \qquad x\to 0,$$
since the integral $\int_{0}^{\infty}\frac{e^{-t}}{t^{\delta-1}}dt$ is finite.

To summarize, as $x \to 0$, we have
\begin{equation}
\label{eq:asympten0}
\eid(x) \sim  -\hd(x) \sim
\begin{cases}
-\hd(0)<0 & \mbox{ if } \delta <1, \\
\ln x & \mbox{ if } \delta =1, \\
-\frac{1}{(\delta-1) x^{\delta -1}} & \mbox{ if } 1<\delta <2. \\
\end{cases}
\end{equation}

\subsubsection{Behavior at infinity}
\label{sec:comportement_infini}
As $x$ goes to infinity, the function $\hd(x)$ tends to zero, whereas $\gd(x)$ goes to infinity. More precisely, we have
$$\gd(x)\sim \frac{e^{x}}{x^{\delta}}, \quad x\to\infty.$$
Indeed, since $2\sinh(t)/t^{\delta}\sim e^t/t^{\delta}$ as $t\to \infty$, the integral that defines $\gd(x)$ goes to $\infty$ as $x\to \infty$, therefore
$$
\gd(x)\sim \int_{1}^x \frac{2\sinh(t)}{t^{\delta}} dt\sim \int_1^x \frac{e^t}{t^{\delta}} dt.
$$
Now, the behavior of the last integral is obtained by writing
$$
x^{\delta}e^{-x}\int_1^x \frac{e^t}{t^{\delta}} dt=\int_1^x \left(\frac{x}{t}\right)^{\delta} e^{-(x-t)} dt = \int_0^{x-1}\left(1+\frac{u}{x-u}\right)^{\delta} e^{-u} du.
$$
As $x\to\infty$, the last integral converges to $\int_0^{\infty} e^{-u} du=1$ by the dominated convergence theorem. (The function under the integral symbol times the indicator function of $[0,x-1]$ is dominated on $[0,+\infty[$ and uniformly in $x\geq 1$ by the function $(1+u)^{\vert \delta \vert}e^{-u}$, which is integrable.)
So finally, we obtain
\begin{equation}
\label{eq:asymptalinfini}
\eid(x)\sim \frac{e^x}{x^{\delta}}, \qquad x\to \infty.
\end{equation}

\subsubsection{Back to the definition}

The definition of $\eid$ as a principal value is quite rigid and, at some point, we shall need a bit of flexibilty. This is the subject of the following lemma.
\begin{lemma}
\label{lem:eiddefwithdifferentspeed}
Let $a$ and $b$ be two positive functions of the real variable $\epsilon$ such that $a, b\to 0$ and $\vert a-b \vert= O(\min^2(a,b))$ as $\epsilon\to 0$. Then
$$\lim_{\epsilon\to 0} \left\{\int_{-\infty}^{-a} +\int_{b}^x\right\} \frac{e^t}{t^{\delta}} dt= \eid(x).$$
\end{lemma}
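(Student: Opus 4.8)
The plan is to reduce the asymmetric cutoff to the symmetric one on which $\eid$ is defined and then control the resulting error. Since the right-hand integral already ends at $b$, the natural move is to take $b$ as a common cutoff on both sides. Splitting the first integral at $-b$ gives
\begin{equation*}
\left\{\int_{-\infty}^{-a} +\int_{b}^x\right\} \frac{e^t}{t^{\delta}} dt = \left\{\int_{-\infty}^{-b} +\int_{b}^x\right\} \frac{e^t}{t^{\delta}} dt + \int_{-b}^{-a} \frac{e^t}{t^{\delta}} dt .
\end{equation*}
As $\epsilon\to 0$ we have $b\to 0^+$, so the bracketed term on the right is exactly the symmetric principal value from the definition of $\eid$, read along the parametrization $b=b(\epsilon)$; since that principal value is a genuine limit, it converges to $\eid(x)$. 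Everything thus reduces to showing that the error term $\int_{-b}^{-a} e^t t^{-\delta}\,dt$ tends to $0$.

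For the error term I would pass to positive variables. With the sign convention $t^{\delta}=-\vert t\vert^{\delta}$ for $t<0$, the substitution $t=-s$ gives
\begin{equation*}
\int_{-b}^{-a} \frac{e^t}{t^{\delta}} dt = \int_{b}^{a} \frac{e^{-s}}{s^{\delta}} ds ,
\end{equation*}
whose modulus is at most $\int_m^M s^{-\delta}\,ds$, where $m=\min(a,b)$ and $M=\max(a,b)$, using $e^{-s}\le 1$. Bounding the monotone integrand by its supremum on $[m,M]$ yields
\begin{equation*}
\left\vert \int_{-b}^{-a} \frac{e^t}{t^{\delta}} dt \right\vert \le (M-m)\sup_{s\in[m,M]} s^{-\delta} = \vert a-b\vert\,\max\!\left(m^{-\delta},M^{-\delta}\right).
\end{equation*}

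It then remains to insert the hypothesis. Since $a,b\to 0$ with $\vert a-b\vert=O(m^2)$, we have $M=m+O(m^2)\sim m\to 0$, so the two factors above are comparable to $m^2$ and $m^{-\delta}$ respectively; for $0<\delta<2$ this produces a bound of order $m^{2-\delta}\to 0$, while the cases $\delta\le 0$ are even easier since the integrand stays bounded near the origin. Hence the error term vanishes and the stated limit equals $\eid(x)$. The one genuinely delicate point is this final estimate, and it is precisely here that the quantitative hypothesis $\vert a-b\vert=O(\min^2(a,b))$ earns its place: it is exactly what defeats the singularity $s^{-\delta}$ as $\delta\uparrow 2$. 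Indeed, had the gap been only of order $m$, the error would be of order $m^{1-\delta}$, which blows up for $1<\delta<2$; the squared smallness of $\vert a-b\vert$ is what buys the extra power needed to keep the correction negligible uniformly in $\delta<2$.
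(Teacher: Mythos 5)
Your proof is correct and follows essentially the same route as the paper's: reduce to the symmetric principal value by introducing a common cutoff, then kill the leftover gap integral via the bound $\vert a-b\vert\cdot\min(a,b)^{-\delta}=O\bigl(\min(a,b)^{2-\delta}\bigr)$, which is where $\delta<2$ and the hypothesis $\vert a-b\vert=O(\min^2(a,b))$ enter. The only cosmetic difference is that you split at $-b$ (one error term, on the negative axis), whereas the paper uses $c=\max(a,b)$ as the common cutoff (two error terms); the estimates are the same in substance.
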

\begin{proof}
Let $c=\max(a,b)$ and write the above integral as
$$\left\{\int_{-\infty}^{-c} +\int_{c}^x\right\} \frac{e^t}{t^{\delta}} dt+ \int_{-c}^{-a} \frac{e^t}{t^{\delta}} dt +\int_{b}^c  \frac{e^t}{t^{\delta}} dt.$$
The first part converges to $\eid(x)$ as $c\to 0$, therefore it suffices to show that the two last integrals converge to zero. So we consider
$\int_{b}^c  \frac{e^t}{t^{\delta}} dt$ (the other one can be handled in the exact same way) and we focus on the case $1\leq\delta<2$ (else the result is obvious since the function is integrable and $c-b$ converges to zero). Since $e^t$ is increasing and $t^{\delta}$ is non decreasing, the integral can be bounded as follows:
$$\int_{b}^c  \frac{e^t}{t^{\delta}} dt\leq e^c \times \frac{c-b}{b^{\delta}}.$$
By assumption, there is a positive constant constant $K$ such that $\vert c-b \vert \leq K b^2$, hence $$\int_{b}^c  \frac{e^t}{t^{\delta}} dt \leq K b^{2-\delta},$$ and this proves the result since $\delta<2$.
\end{proof}

\subsection{Integrability of the eigenfunctions}
\label{sec:integrability}
Instead of the function $\phi_d$ defined in \eqref{eq:definitionphid} , we will work with a slighlty modified version of it. Namely, to each integer $d\geq 1$, we associate the radial function $f_d : \RR^d \setminus\{0\} \to \RR$ which is defined for all $x=(x_1,x_2,\ldots, x_d)\in \RR^d \setminus\{0\}$ by
\begin{equation}
\label{eq:definitionfd}
f_d(x)= r^{2-d}e^{-r^2}\eid(r^2), 
\end{equation}
where $r=\Vert x\Vert$ and $\delta=2-\frac{d}{2}$.
As $d\geq 1$ runs through the integers, the number $\delta$ takes the values $\frac{3}{2}$, $1$, $\frac{1}{2}$, $0$, $-\frac{1}{2}$, \dots Note that our eigenfunction $\phi_d$ is related to the function $f_d$ by the relation 
\begin{equation}
\label{eq:lienphidfd}
\phi_d(x)=(\sqrt{2})^{d-2}f_d(x/\sqrt{2}),
\end{equation}
thus they share the same integrability properties.

Let us now study the integrability of the function $f_d$. 
Because of \eqref{eq:asympten0}, as $r\to 0$, the following estimates hold:
\begin{equation}
\label{eq:fdenzero}
f_d(x) \sim
\begin{cases}
-r^{2-d}\hd(0) & \mbox{ if } d\geq 3, \\
2 \ln r & \mbox{ if } d=2, \\
-2 & \mbox{ if } d=1. \\
\end{cases}
\end{equation}
Now, as $r\to\infty$, it follows from \eqref{eq:asymptalinfini} that
\begin{equation}
\label{eq:fdalinfini}
f_d(x) \sim \frac{1}{r^2}.
\end{equation}
Let $p>0$ be given. Since $f_d$ is a radial function, it belongs to $L^p(\RR^d)$ if and only if 
\begin{align*}
\int_{0}^{\infty} \vert f_d(r)\vert^p r^{d-1} dr <\infty.
\end{align*}
Because of \eqref{eq:fdenzero}, the $\int_0^T$ part of the above integral is finite when $d=1$ or $d=2$ for any $p>0$, but for $d\geq 3$, it is finite if and only if $p<\frac{d}{d-2}$. On the other hand, estimate \eqref{eq:fdalinfini} ensures that the $\int_T^\infty$ part is finite if and only if $p>\frac{d}{2}$. Therefore, considering only integer values of $p$, we can state the following:
\begin{itemize}
\item the function $f_1$ belongs to $L^p(\RR)$ for all $p\geq 1$,
\item the function $f_2$ belongs to $L^p(\RR^2)$ if and only if $p\geq 2$,
\item the function $f_3$ belongs to $L^p(\RR^3)$ if and only if $p=2$,
\item for $d\geq 4$, the function $f_d$ does not belong to any $L^p(\RR^d)$.
\end{itemize}
In the last case, we shall use the theory of Fourier transform for tempered distributions in order to give a meaning to $\widehat{f_d}$. To see that function $f_d$ defines a tempered distribution, we first observe that the integral
$$C_d=\int_{\RR^d} \frac{\vert f_d(x)\vert}{1+\Vert x\Vert^{2d}} dx $$ 
is finite for any $d\geq 1$. This follows from \eqref{eq:fdenzero} and \eqref{eq:fdalinfini} above.
Now, for any test function $\phi$ that belongs to the Schwartz space $\mathcal{S}$ of infinitely differentiable functions rapidly decaying at infinity (see \cite[D\'efinition 9.2.1]{Bon06} for example), we have
\begin{equation}
\label{eq:fdisadistribution}
\vert \sclr{f_d}{\phi}\vert=\left\vert \int_{\RR^d} f_d(x) \phi(x) dx \right\vert \leq
C_d \sup_{x\in\RR^d}\vert (1+\Vert x\Vert^{2d}) \phi(x)\vert.
\end{equation}
Therefore, the function $f_d$ defines a continuous linear form on $\mathcal{S}$, i.e.~$f_d$ defines a tempered distribution (see \cite[Section 9.3]{Bon06}).

\subsection{Computation of the Fourier transform}
\label{sec:compfouriertrans}
Let $d\geq 1$ be fixed. 
Here again, for convenience, we will work with the function $f_d$ defined in \eqref{eq:definitionfd} instead of the eigenfunction $\phi_d$ defined in \eqref{eq:definitionphid}. Since $\phi_d(x)$ is equal to $f_d(x/\sqrt{2})$ up to a multiplicative constant (see equation~\eqref{eq:lienphidfd}), proving Theorem \ref{mainthm} is equivalent to establishing the relation
\begin{equation}
\label{eq:fourierfdrelation}
\widehat{f_d}(x)=-\pi^{d/2}f_d(x/2).
\end{equation}
Indeed, a simple calculation shows that if a function $f:\RR^d\to \CC$ satisfies the relation 
$$\widehat{f}(x)=\lambda f(\beta x)$$
for some $\lambda\in \CC$, $\beta>0$ and all $x\in \RR^d$, then the function $x\mapsto f(\sqrt{\beta}x)$ is an eigenfunction associated with eigenvalue $\lambda \beta^{-d/2}$.

To compute the Fourier transform of the function $f_d$, we introduce the functions $f_d^{\alpha}$, $\alpha>0$, which are defined by 
$$f_d^{\alpha}(x)=f_d(x)e^{-\alpha r^2} = r^{2-d}e^{-(1+\alpha) r^2}\eid(r^2),$$
where $r=\Vert x\Vert$, and 
$\delta=2-\frac{d}{2}$. These functions belong to $L^1(\RR^d)$ (see estimates \eqref{eq:fdenzero} and \eqref{eq:fdalinfini}). We shall compute their Fourier transform $\widehat{f_d^{\alpha}}$ and then take the limit as $\alpha\to 0$ to obtain the Fourier transform $\widehat{f_d}$ of $f_d$.

Thanks to decomposition \eqref{eq:defexpintegralesansvp} we can write
$$f_d^{\alpha}=g_d^{\alpha}-h_d^{\alpha},$$
where
$$g_d^{\alpha}(x)=r^{2-d}e^{-(1+\alpha)r^2} \gd(r^2) = e^{-(1+\alpha)r^2} \int_{0}^1 \frac{e^{r^2t}-e^{-r^2t}}{t^{\delta}} dt$$
and
$$h_d^{\alpha}(x)=r^{2-d}e^{-(1+\alpha)r^2} \hd(r^2)= e^{-(1+\alpha)r^2} \int_{1}^{\infty}\frac{e^{-r^2t}}{t^{\delta}}dt.$$
(Note that the last integral form for each function is obtained through the change of variable $t\leftarrow r^2 t$.)
The estimates of Sections \ref{sec:comportement_zero} and \ref{sec:comportement_infini} show that these two functions belong to $L^1(\RR^d)$, hence
\begin{equation}
\label{eq:decomp_fourier_f_alpha}
\widehat{f_d^{\alpha}}(u)=\widehat{g_d^{\alpha}}(u)-\widehat{h_d^{\alpha}}(u),
\end{equation}
for all $u=(u_1,u_2,\ldots,u_d)\in \RR^d$.
We first compute the Fourier transform of $\widehat{h_d^{\alpha}}$ of $h_d^{\alpha}$. By Fubini's theorem, we have
\begin{align}
\label{eq:fubini_h_alpha}
\widehat{h_d^{\alpha}}(u) &= \int_{\RR^d} e^{-i \sclr{u}{x}} e^{-(1+\alpha)r^2}  \left(\int_{1}^{\infty} \frac{e^{-r^2t}}{t^{\delta}} dt\right) dx \\
	\nonumber			& = \int_{1}^{\infty} \left( \int_{\RR^d} e^{-i \sclr{u}{x}} e^{-(1+\alpha+t)r^2 }dx \right)  \frac{dt}{t^{\delta}}.
\end{align}
(See Section \ref{sec:fubini_h_alpha} for the verification of Fubini's theorem hypotheses here.)
The integral inside the parentheses is the Fourier transform of the $d$-dimensional Gaussian function
$e^{-(1+\alpha+t) (x_1^2+x_2^2+\cdots +x_d^2)}$. It is equal to
$$\left(\frac{\pi}{1+\alpha+t}\right)^{d/2}\exp\left( -\frac{\rho^2}{4(1+\alpha+t)}\right),$$
where $\rho^2=u_1^2+u_2^2+\cdots+u_d^2$. Therefore
\begin{equation}
\label{eq:lastexpressionhalpha}
\widehat{h_d^{\alpha}}(u) = \pi^{d/2} \int_{1}^{\infty} \frac{1}{(1+\alpha+t)^{d/2} t^{\delta}} \exp\left( -\frac{\rho^2}{4(1+\alpha+t)}\right) dt.
\end{equation}
By the dominated convergence theorem (see Section \ref{sec:conv_dom_h_alpha} for the details), we get
\begin{align}
\label{eq:conv_dom_h_alpha}
\lim_{\alpha\to 0} \widehat{h_d^{\alpha}}(u) & = \pi^{d/2} \int_{1}^{\infty} \frac{1}{(1+t)^{d/2} t^{\delta}} \exp\left( -\frac{\rho^2}{4(1+t)}\right) dt\\
\nonumber	& = \pi^{d/2} e^{-\rho^2/4} \int_{1}^{\infty} \frac{1}{(1+t)^{d/2} t^{\delta}} \exp\left(\frac{\rho^2t}{4(1+t)}\right) dt.
\end{align}
Now we perform the change of variable $s=\dfrac{t}{1+t}$. Since $1+t=\dfrac{1}{1-s}$, $t=\dfrac{s}{1-s}$ and $dt=\dfrac{ds}{(1-s)^2}$, we see that
$$ \frac{1}{(1+t)^{d/2} t^{\delta}} dt = \frac{(1-s)^{d/2}(1-s)^\delta}{s^{\delta}} \dfrac{ds}{(1-s)^2}=\frac{ds}{s^\delta},$$
where we have used the relation $\delta=2-\frac{d}{2}$. It follows that
\begin{equation}
\label{eq:limite_1}
\lim_{\alpha\to 0} \widehat{h_d^{\alpha}}(u) = \pi^{d/2} e^{-\rho^2/4} \int_{1/2}^{1} \exp\left(\frac{\rho^2 s}{4}\right)\frac{ds}{s^{\delta}}.
\end{equation}

Let us now compute the Fourier transform $\widehat{g_d^{\alpha}}$. As in the computation of $\widehat{h_d^{\alpha}}$, we use Fubini's theorem (see Section \ref{sec:fubini_g_alpha} for the justification) and the explicit expression of the Fourier transform of a Gaussian to obtain 
\begin{align}
\label{eq:fubini_g_alpha}
\widehat{g_d^{\alpha}}(u) &= \int_{\RR^d} e^{-i\sclr{u}{x}} e^{-(1+\alpha)r^2}  \left(\int_{0}^1 \frac{e^{r^2t}-e^{-r^2t}}{t^{\delta}} dt\right) dx\\
\nonumber		&= \int_{0}^1 \left( \int_{\RR^d} e^{-i\sclr{u}{x}} \left( e^{-(1+\alpha-t)r^2}-e^{-(1+\alpha+t)r^2} \right) dx \right) \frac{dt}{t^{\delta}}\\
\nonumber		&= \pi^{d/2} \int_{0}^1 \left( \frac{\exp\left( -\frac{\rho^2}{4(1+\alpha-t)}\right)}{(1+\alpha-t)^{d/2}}-\frac{\exp\left( -\frac{\rho^2}{4(1+\alpha+t)}\right)}{(1+\alpha+t)^{d/2}} \right) \frac{dt}{t^{\delta}}.
\end{align}
By the dominated convergence theorem (see Section \ref{sec:conv_dom_g_alpha} for the justification), we obtain
\begin{align}
\label{eq:conv_dom_g_alpha}
\lim_{\alpha\to 0} \widehat{g_d^{\alpha}}(u)  & =\pi^{d/2} \int_{0}^1 \left( \frac{\exp\left( -\frac{\rho^2}{4(1-t)}\right)}{(1-t)^{d/2}}-\frac{\exp\left( -\frac{\rho^2}{4(1+t)}\right)}{(1+t)^{d/2}} \right) \frac{dt}{t^{\delta}} \\
\nonumber & =\pi^{d/2} e^{-\rho^2/4} \int_{0}^1 \left( \frac{\exp\left( -\frac{\rho^2 t}{4(1-t)}\right)}{(1-t)^{d/2}}-\frac{\exp\left( \frac{\rho^2t}{4(1+t)}\right)}{(1+t)^{d/2}} \right) \frac{dt}{t^{\delta}}.
\end{align}
When $\delta=\frac{3}{2}$ or $1$ (i.e.~$d=1$ or $2$), we can not separate the integral above into the two ``obvious'' parts because the two functions are not separately integrable at $0$. For this reason, we write the integral as the limit of the integral over $[\epsilon, 1]$, so that 
\begin{equation}
\label{eq:limite_2}
\lim_{\alpha\to 0} \widehat{g_d^{\alpha}}(u) = \lim_{\epsilon \to 0} \pi^{d/2} e^{-\rho^2/4} (A_{\epsilon}-B_{\epsilon}),
\end{equation}
where
$$
A_{\epsilon} = \int_{\epsilon}^1 \frac{\exp\left( -\frac{\rho^2t}{4(1-t)}\right)}{(1-t)^{d/2} t^{\delta}} dt
\quad\mbox{ and }\quad
B_{\epsilon} = \int_{\epsilon}^1 \frac{\exp\left( \frac{\rho^2t}{4(1+t)}\right)}{(1+t)^{d/2} t^{\delta}} dt.
$$
The change of variables $s=\frac{-t}{1-t}$ in the first integral gives
$$A=-\int_{-\infty}^{\frac{-\epsilon}{1-\epsilon}} \exp\left( \frac{\rho^2s}{4}\right) \frac{ds}{s^{\delta}}.$$
In the second integral, we use the change of variables $s=\frac{t}{1+t}$ to obtain
$$B= \int_{\frac{\epsilon}{1+\epsilon}}^{1/2} \exp\left( \frac{\rho^2s}{4}\right)\frac{ds}{s^{\delta}}.$$
Therefore, adding the two parts \eqref{eq:limite_1} and \eqref{eq:limite_2}, we get that $\lim_{\alpha\to 0}\widehat{f^{\alpha}_d}(u)$ is the limit of
\begin{equation*}
-\pi^{d/2} e^{-\rho^2/4}  \left\{\int_{-\infty}^{\frac{-\epsilon}{1-\epsilon}} +\int_{\frac{\epsilon}{1+\epsilon}}^1\right\} \exp\left(\frac{\rho^2s}{4}\right)\frac{ds}{s^{\delta}}
\end{equation*}
 as $\epsilon\to 0$.
Therefore, it follows from Lemma~\ref{lem:eiddefwithdifferentspeed} that
\begin{equation}
\label{convergenceponctuelle}
\lim_{\alpha\to 0}\widehat{f^{\alpha}_d}(u)=-\pi^{d/2} e^{-\rho^2/4} \eid(\rho^2/4)=-\pi^{d/2} f_d(u/2),
\end{equation}
for all $u\in\RR^d$.
It remains to connect $\lim_{\alpha\to 0}\widehat{f^{\alpha}_d}$ and $\widehat{f_d}$ so as to prove that 
\begin{equation}
\label{finalequality}
\widehat{f_d}(u)=-\pi^{d/2} f_d(u/2)
\end{equation}
for almost all $u\in\RR^d$.

For $d=1,2,3$, this follows quite directly from the fact that $f_d$ belongs to $L^2(\RR^d)$. Indeed, since $\vert f_d^{\alpha}(x)\vert =\vert f_d(x) e^{-\alpha r^2}\vert \leq \vert f_d(x)\vert$, the dominated convergence theorem ensures that $f_d^{\alpha}\to f_d$ in $L^2(\RR^d)$, so that 
$\widehat{f^{\alpha}_d}\to \widehat{f_d}$ in $L^2(\RR^d)$ too. By extracting a subsequence that converges almost everywhere, we obtain the equality~\eqref{finalequality}.

In other cases ($d\geq 4$), the function $f_d$ doesn't belong to any $L^p(\RR^d)$ space, and we have to work in $\mathcal{S}'$. In this space, it holds that $\widehat{f_d^{\alpha}}\to \widehat{f_d}$.
Indeed, since the Fourier transformation is a continuous operator on $\mathcal{S}'$ endowed with the weak topology (see \cite[Th\'eor\`eme 9.4.3]{Bon06}), it suffices to show that $f_d^{\alpha}\to f_d$ in $\mathcal{S}'$, i.e. for all $\phi\in \mathcal{S}$, 
$$\lim_{\alpha\to 0} \sclr{f_d^{\alpha}}{\phi}=\sclr{f_d}{\phi}.$$
To see this, we use the same bound as in~\eqref{eq:fdisadistribution}:
\begin{equation*}
\vert \sclr{f_d^{\alpha}}{\phi}-\sclr{f_d}{\phi}\vert  \leq 
C_d^{\alpha} \sup_{x\in\RR^d}\vert (1+\Vert x\Vert^{2d}) \phi(x)\vert,
\end{equation*}
where 
\begin{equation*}
C_d^{\alpha}=\int_{\RR^d} \frac{\vert f_d^{\alpha}(x)-f_d(x)\vert}{1+\Vert x\Vert^{2d}} dx
=\int_{\RR^d}\frac{\vert f_d(x)\vert }{1+\Vert x\Vert^{2d}} (1-e^{-\alpha r^2}) dx.
\end{equation*}
Since the function $\dfrac{\vert f_d(x)\vert }{1+\Vert x\Vert^{2d}}$ is integrable, it follows from the dominated convergence theorem that $\lim_{\alpha\to 0} C_d^{\alpha}=0$. Therefore $f_d^{\alpha}\to f_d$ in $\mathcal{S}'$, and, by consequence,
$\widehat{f_d^{\alpha}}\to \widehat{f_d}$ in $\mathcal{S}'$ too.
Put $g(u)=-\pi^{d/2} f_d(u/2)$. We know from \eqref{convergenceponctuelle} that $\widehat{f_d^{\alpha}}(u)\to g(u)$ for every $u\in\RR^d$. In order to identify $\widehat{f_d}$ with $g$, we show that the convergence also holds in $\mathcal{S}'$. To do this, let us consider the function $\dfrac{\widehat{f_d^{\alpha}}(u)}{1+\Vert u\Vert^{2d+4}} $ which converges almost everywhere to $\dfrac{g(u)}{1+\Vert u\Vert^{2d+4}} $. Thanks to inequality \eqref{eq:boundonfdalphauniform} in Section~\ref{uniformboundfdalphahat} below, we have
$$\dfrac{\vert \widehat{f_d^{\alpha}}(u)\vert}{1+\Vert u\Vert^{2d+4}}\leq \frac{A\rho^{-2}+B \rho^4}{1+\rho^{2d+4}},$$
where $\rho=\Vert u\Vert$. Since the function on the right hand side of the inequality is integrable on $\RR^d$ for all $d\geq 3$, the dominated convergence theorem ensures that
$\dfrac{\widehat{f_d^{\alpha}}(u)}{1+\Vert u\Vert^{2d+4}} $ converges to $\dfrac{g(u)}{1+\Vert u\Vert^{2d+4}} $ in $L^1(\RR^d)$, hence in $\mathcal{S}'$ too. This implies that  $\widehat{f_d^{\alpha}}$ converges to $g$ in $\mathcal{S}'$ because the space of test functions $\mathcal{S}$ is stable by multiplication by a polynomial.

As a conclusion, the equality $\widehat{f_d}=g$ holds in $\mathcal{S}'$, hence the distribution $\widehat{f_d}$ is regular and equality~\eqref{finalequality} holds.

\subsection{Additional computations}

\subsubsection{Use of Fubini's theorem in equation \eqref{eq:fubini_h_alpha} }
\label{sec:fubini_h_alpha}
In order to justify the use of Fubini's theorem in \eqref{eq:fubini_h_alpha}, we have to check that the following integral is finite:
$$I=\int_{1}^{\infty} \left( \int_{\RR^d} e^{-(1+\alpha+t)r^2 }dx\right)  \frac{dt}{t^{\delta}}.$$
Integrating in polar coordinates gives
$$I = c\int_{1}^{\infty} \left( \int_{0}^{\infty} e^{-(1+\alpha+t)r^2 }r^{d-1} dr\right)  \frac{dt}{t^{\delta}},$$
where $c$ is the surface area of the unit sphere $\SS^{d-1}$. Using the linear change of variable $s=\sqrt{1+\alpha+t}\times r$ in the integral inside the parenthesis, we obtain
\begin{align*}
I & = c \int_{1}^{\infty} \left( \frac{1}{(1+\alpha+t)^{d/2}}\int_{0}^{\infty} e^{-s^2 }s^{d-1} ds\right)  \frac{dt}{t^{\delta}}\\
		& = c' \int_{1}^{\infty} \frac{dt}{(1+\alpha + t)^{d/2} t^{\delta}},
\end{align*}
where $c'$ is some positive constant. As $t\to\infty$, the denominator of the above fraction is asymptotically equivalent to $t^{d/2}\times t^\delta=t^2$, hence the integral is finite.

\subsubsection{Justification of the convergence in \eqref{eq:conv_dom_h_alpha}} 
\label{sec:conv_dom_h_alpha}
In order to use the dominated convergence theorem in \eqref{eq:conv_dom_h_alpha}, we have to bound uniformly w.r.t. $\alpha$ the function
$$t\mapsto\frac{1}{(1+\alpha+t)^{d/2}t^{\delta}} \exp\left( -\frac{\rho^2}{4(1+\alpha+t)}\right)$$
by an integrable function on $[1,+\infty[$. Here it suffices to consider the function
$$t\mapsto \frac{1}{(1+t)^{d/2}t^{\delta}},$$
which is integrable on $[1,+\infty[$ since $\frac{d}{2}+\delta=2$.

\subsubsection{Use of Fubini's theorem in equation \eqref{eq:fubini_g_alpha}}
\label{sec:fubini_g_alpha}
Here we have to check that the following integral is finite: 
$$J=\int_{0}^1 \left( \int_{\RR^d} \left( e^{-(1+\alpha-t)r^2}-e^{-(1+\alpha+t)r^2} \right) dx\right) \frac{dt}{t^{\delta}}.$$
Integrating in polar coordinates gives
$$
J = c\int_{0}^{1} \left( \int_{0}^{\infty} \left( e^{-(1+\alpha-t)r^2}-e^{-(1+\alpha+t)r^2} \right)r^{d-1} dr\right)  \frac{dt}{t^{\delta}},
$$
where $c$ is the surface area of the unit sphere $\SS^{d-1}$. Using the same argument as in Section~\ref{sec:fubini_h_alpha} we get
$$
J = c'\int_{0}^{1} \left( \frac{1}{(1+\alpha - t)^{d/2}}  -  \frac{1}{(1+\alpha +t)^{d/2} } \right) \frac{dt}{t^{\delta}},
$$
where $c'$ is some positive constant. The function inside the integral is continuous on $(0,1]$ and, as $t\to 0$, it is asymptotically equivalent to
$$\frac{d}{(1+\alpha)^{\frac{d}{2}+1}}\times \frac{1}{t^{\delta -1}}.$$
Since $\delta-1=1-\dfrac{d}{2}<\frac{1}{2}$, the integral is finite.

\subsubsection{Justification of the convergence in \eqref{eq:conv_dom_g_alpha}}
\label{sec:conv_dom_g_alpha}
First we use the change of variable $t=(1+\alpha)x$ in the last integral of equation \eqref{eq:fubini_g_alpha} to obtain
\begin{equation}
\label{eq:fubini_derniere_expression}
\widehat{g_d^{\alpha}}(u)=\frac{\pi^{d/2}}{1+\alpha}\int_{0}^{\frac{1}{1+\alpha}} \left( \frac{\exp\left( -\frac{A}{1-x}\right)}{(1-x)^{d/2}}-\frac{\exp\left( -\frac{A}{1+x}\right)}{(1+x)^{d/2}} \right) \frac{dx}{x^{\delta}},
\end{equation}
where $A=\rho^2/4(1+\alpha)$. We are going to bound on $[0,1]$ the function under the integral sign, uniformly for $\alpha\in [0,1]$, i.e.~as $A$ remains in $[\rho^2/8, \rho^2/4]$, by an integrable function. We shall use different bounds on the intervals $[0,1/2]$ and $[1/2,1]$.\footnote{It is not necessary if one only wants to apply the dominated convergence theorem, but we need more precise bounds on $\widehat{g_d^{\alpha}}$ in order to show convergence in the space of distributions.}

First, on $[1/2, 1]$, the very simple bound
\begin{equation}
\label{eq:boundonend}
\left\vert \frac{1}{x^{\delta}}\left ( \frac{\exp\left( -\frac{A}{1-x}\right)}{(1-x)^{d/2}}-\frac{\exp\left( -\frac{A}{1+x}\right)}{(1+x)^{d/2}}\right) \right \vert
  \leq \frac{1}{x^{\delta}}\left(\frac{\exp\left( -\frac{\rho^2}{8(1-x)}\right)}{(1-x)^{d/2}}+\frac{\exp\left( -\frac{\rho^2}{8(1+x)}\right)}{(1+x)^{d/2 }}\right),
\end{equation}
is sufficient since the function on the right is continuous on $[1/2,1]$.

An integrable uniform bound on $[0,1/2]$ is a bit harder to obtain. Let us notice that the function inside the integral \eqref{eq:fubini_derniere_expression} has the form
\begin{equation}
\label{eq:defpsiA}
\psi_A(x)=\frac{\phi_{A}(-x)-\phi_{A}(x)}{x^{\delta}},
\end{equation}
where
$$\phi_A(x)=\frac{\exp\left( -\frac{A}{1+x}\right)}{(1+x)^{d/2}}.$$
The last function is $C^2$ on $[-1,1]$ and its first two derivatives are given by
$$\phi_A'(x)=\left(A-\frac{d}{2}(1+x)\right)\frac{\exp\left( -\frac{A}{1+x}\right)}{(1+x)^{2+\frac{d}{2}}},$$
and
$$\phi_A''(x)=\left(A^2-(d+2)(1+x)A+\frac{1}{4}d(d+2)(1+x)^2\right)\frac{\exp\left( -\frac{A}{1+x}\right)}{(1+x)^{\frac{d}{2}+4}}.$$
The second derivative can be bounded on $[-1/2,1/2]$ by
$$(A^2+2(d+2)A+d(d+2)) \sup_{y\in[2/3,2]} y^{\gamma}e^{-Ay},$$
where $\gamma=\frac{d}{2}+4$.
Since the function $y\mapsto y^{\gamma}e^{-Ay}$ reaches its maximum value on $[0,\infty)$ at $y=\gamma/A$ and is non-decreasing until this value, the following upper bound holds:
$$\sup_{y\in[2/3,2]} y^{\gamma}e^{-Ay}\leq 2^{\gamma}e^{-2A} 1_{\{2A\leq \gamma\}}+\frac{\gamma^{\gamma}}{A^{\gamma}}e^{-\gamma} 1_{\{2A> \gamma\}}\leq C,$$
where $C$ is some constant only depending on $\gamma$, hence $d$.
To summarize, the second derivative $\phi_A''$ is bounded on $[-1/2,1/2]$ by some constant $P_A=bA^2+c$, where $b,c$ are positive constants only depending on $d$.
Thanks to Taylor-Lagrange inequality, for all $x\in [-1/2,1/2]$,
$$\vert\phi_A(x)-(\phi_A(0)+ \phi_A'(0)x)\vert\leq \frac{P_A\vert x\vert^2}{2!}.$$
Therefore, for every $x\in[0,1/2]$,
\begin{align}
\label{eq:boundgnearzero}
\nonumber\vert\psi_A(x)\vert & \leq 2\vert \phi_A'(0)\vert \vert x\vert^{1-\delta}+ P_A \vert x\vert^{2-\delta} \\
 & \leq \vert 2A-d\vert e^{-A} \vert x\vert^{1-\delta}+ P_A \vert x\vert^{2-\delta}\\
\nonumber & \leq C \vert x\vert^{1-\delta}+ P_A \vert x\vert^{2-\delta},
\end{align}
where $C$ is some absolute constant.
Since $\delta<2$, the function on the right side is integrable on $[0,1/2]$. This concludes the proof.

\subsubsection{A uniform bound for $\widehat{f_d^{\alpha}}$}
\label{uniformboundfdalphahat}

In this section, we show that there exist constants $A$ and $B$, only depending on the dimension $d$, such that, for all $\alpha\in [0,1]$, 
\begin{equation}
\label{eq:boundonfdalphauniform}
\vert \widehat{f_d^{\alpha}}(u)\vert\leq \frac{A}{\rho^2}+B \rho^4,
\end{equation}
where $\rho=\Vert u\Vert$.
Recall that $\widehat{f_d^{\alpha}}(u)=\widehat{g_d^{\alpha}}(u)-\widehat{h_d^{\alpha}}(u)$. First, we bound  $\widehat{g_d^{\alpha}}(u)$ using the bounds of Section~\ref{sec:conv_dom_g_alpha}. Recall from \eqref{eq:fubini_derniere_expression} that
$$ \widehat{g_d^{\alpha}}(u)=\frac{\pi^{d/2}}{1+\alpha}\int_{0}^{\frac{1}{1+\alpha}} \psi_A(x) dx $$
with $A=\rho^2/4$, and $\psi_A$ defined by equation~\eqref{eq:defpsiA}.

Integrating inequality~\eqref{eq:boundonend} over $[1/2,1]$, we obtain
\begin{align*}
\int_{1/2}^1 \vert \psi_A(x)\vert dx & \leq \int_{1/2}^1 \frac{\exp\left( -\frac{\rho^2}{8(1-x)}\right)}{x^{\delta}(1-x)^{d/2}} dx +\int_{1/2}^1 \frac{\exp\left( -\frac{\rho^2}{8(1+x)}\right)}{x^{\delta}(1+x)^{d/2}} dx\\
&\leq \int_{2}^\infty \frac{t^{\delta+\frac{d}{2}}}{(t-1)^{\delta}} e^{-\frac{t\rho^2}{8}} dt + \int_{1/2}^{2/3} \frac{t^{\delta+\frac{d}{2}}}{(1-t)^{\delta}} e^{-\frac{t\rho^2}{8}} dt \\
&\leq C\int_{2}^\infty  e^{-\frac{t\rho^2}{8}} dt + \int_{1/2}^{2/3} \frac{t^{\delta+\frac{d}{2}}}{(1-t)^{\delta}} dt \\
&\leq \frac{C'}{\rho^2}+C'',
\end{align*}
where $C, C', C''$ are some absolute constants. Now, integrating inequality~\eqref{eq:boundgnearzero} over $[0,1/2]$ gives
\begin{equation*}
\int_0^{1/2} \vert \psi_A(x)\vert dx \leq C'''(1+P_A)\leq b'\rho^4+c',
\end{equation*}
where $C''', b', c'$ are some constants depending only on $d$. Putting things together, we obtain the bound
$$\vert \widehat{g_d^{\alpha}}(u)\vert \leq  \frac{K}{\rho^2}+ K'\rho^4,$$
where $K$ and $K'$ are constants depending only on the dimension $d$.

It remains to bound $\vert \widehat{h_d^{\alpha}}(u)\vert$.
Using the expression \eqref{eq:lastexpressionhalpha}, we see that
$$\vert \widehat{h_d^{\alpha}}(u)\vert\leq \pi^{d/2} \int_{1}^{\infty} \frac{1}{(1+t)^{d/2} t^{\delta}} dt.$$
Since $\frac{d}{2}+\delta =2$, the last integral is finite, so that $\vert \widehat{h_d^{\alpha}}(u)\vert$ is bounded by an absolute constant $K''$, independently of $u$ and $\alpha\in[0,1]$. Hence, we have
$$\vert \widehat{f_d^{\alpha}}(u)\vert\leq\vert \widehat{g_d^{\alpha}}(u)\vert+\vert \widehat{h_d^{\alpha}}(u)\vert \leq \frac{K}{\rho^2}+ K'\rho^4 +K'',$$
and the bound \eqref{eq:boundonfdalphauniform} holds.

\section*{Acknowledgments}

The authors would like to thank Lo\"ic Chaumont for his role in having made this collaboration possible. They would also like to thank Georges Boudebs, Mihaela Chis, and Christophe Cassagne for introducing them to thermal lens spectroscopy and for their supervision during J.~B.~Zinoune's PhD thesis.

\bibliographystyle{unsrt}
\makeatletter
\def\@openbib@code{\itemsep=-3pt}
\makeatother

\end{document}